\newtheorem{theorem}{Theorem}
\newtheorem{lemma}[theorem]{Lemma}
\newtheorem{corollary}[theorem]{Corollary}
\newtheorem{conjecture}[theorem]{Conjecture}
\newcommand{\RR}{\mathbb{R}}
\renewcommand{\SS}{\mathbb{S}}
\DeclareMathOperator{\crn}{cr}
\newcommand{\crg}{\crn_{\SS^2}}
\begin{document}

\title{On a conjecture by Anthony Hill}

\author{
  Bojan Mohar\thanks{Supported in part by the NSERC Discovery Grant R611450 (Canada),
  by the Canada Research Chairs program,
  and by the Research Project J1-8130 of ARRS (Slovenia).}\\[1mm]
  Department of Mathematics\\
  Simon Fraser University\\
  Burnaby, BC, Canada\\
  {\tt mohar@sfu.ca}
}

\maketitle

\begin{abstract}
In the 1950's, English painter Anthony Hill described drawings of complete graphs $K_n$ in the plane having precisely
$$H(n) = \tfrac{1}{4}\lfloor \tfrac{n}{2}\rfloor \, \lfloor \tfrac{n-1}{2}\rfloor \, \lfloor \tfrac{n-2}{2}\rfloor \,\lfloor \tfrac{n-3}{2}\rfloor$$
crossings. It became a conjecture that this number is minimum possible and, despite serious efforts, the conjecture is still widely open. Another way of drawing $K_n$ with the same number of crossings was found by Bla\v zek and Koman in 1963. In this note we provide, for the first time, a very general construction of drawings attaining the same bound. Surprisingly, the proof is extremely short and may as well qualify as a ``book proof". In particular, it gives a very simple explanation of the phenomenon discovered by Moon in 1968 that a random set of $n$ points on the unit sphere $\SS^2$ in $\RR^3$ joined by geodesics gives rise to a drawing whose number of crossings asymptotically approaches the Hill value $H(n)$.
\end{abstract}

\section{Introduction}

English painter Anthony Hill made an extraordinary conjecture in the 1950's that remained unanswered until today despite numerous attacks using powerful machinery in trying to resolve his conjecture. Starting with a question how to draw $\binom{n}{2}$ connections between $n$ objects so that the painting would involve a minimum number of under or over-crossings lead to the notion of the \emph{crossing number} of a graph \cite{HaHi62}. Hill suggested a drawing that involved
\begin{equation}\label{eq:Hill}
   H(n) = \tfrac{1}{4}\lfloor \tfrac{n}{2}\rfloor \, \lfloor \tfrac{n-1}{2}\rfloor \, \lfloor \tfrac{n-2}{2}\rfloor \,\lfloor \tfrac{n-3}{2}\rfloor =
   \left\{
       \begin{array}{ll}
         \tfrac{1}{64} n(n-2)^2(n-4), & \hbox{$n$ is even;} \\[1mm]
         \tfrac{1}{64} (n-1)^2(n-3)^2, & \hbox{$n$ is odd}
       \end{array}
   \right.
\end{equation}
crossings. Hill's drawings of complete graphs are called \emph{cylindrical drawings} because they can be realized on a cylinder in such a way that all vertices lie on the two circles forming the cylinder and no edge crosses those two circles.
Soon after these drawings were published in \cite{HaHi62}, Bla\v zek and Koman \cite{BlKo63} found another kind of drawings of complete graphs involving precisely the same number of crossings. Their drawings correspond to \emph{2-page drawings} in which the vertices are drawn on the boundary of a unit disk in the plane and no edge crosses this boundary, so each edge is entirely inside the disk or entirely outside. It has been proved quite recently that no cylindrical \cite{AAFRS14} and no 2-page drawing \cite{AAFRS13} of $K_n$ has fewer than $H(n)$ crossings, thus giving the first real support to the conjecture of Hill.

We will say that a drawing $D$ of the complete graph $K_n$ is a \emph{Hill drawing} if it has precisely $H(n)$ crossings.

No other Hill drawings of complete graphs have been discovered until 2014 when \'Abrego et al.~\cite{AAFRV14} described modifications of cylindrical drawings of $K_{2n}$ yielding Hill drawings of $K_{2n+1}$, $K_{2n+2}$ and $K_{2n+3}$ that are different in the sense that they are not shellable. As pointed out by one of the referees of the preliminary version of this paper, Jan Kyncl asked on {\tt math}{\it overflow} \cite{Ky13} about more general drawings with $H(n)$ crossings. In a later response to that Overflow post, it was noted that such drawings may be obtained from antipodal drawings on the sphere, very similar to the construction in this paper. Still, several questions from \cite{Ky13} remained unanswered, and we answer some of them in Section~\ref{sect:final comments}.

In this paper we give a very general construction of Hill drawings (that includes in particular both mentioned examples). The proof of extremality of these drawings may be a candidate for a book proof (the count of the crossings fits into a couple of lines, as opposed to two pages in \cite{SchBook18}). Our proof is extremely simple and effective, and describes a family of very symmetric, yet very diverse drawings.

Let $\SS^2$  be the unit sphere in $\RR^3$. For any two points $p,q\in \SS^2$ consider the great circle through $p$ and $q$ (the great circle is unique unless $q$ is antipodal to $p$ in which case there are many). The shorter of the two segments on this circle from $p$ to $q$ is called a \emph{geodesic arc} (or just a \emph{geodesic}). Any geodesic arc joining two antipodal points in $\SS^2$ is a half of a great circle and will be referred to as a \emph{half-circle}.

A \emph{geodesic drawing} of a graph $G$ on $\SS^2$ is a drawing in which all edges are drawn as geodesic arcs. We define the \emph{geodesic crossing number} of the graph $G$ on the sphere as the minimum number of crossings of edges of $G$ in a geodesic drawing of the graph, and denote it by $\crg(G)$.

A set $P$ of points in $\SS^2$ is \emph{in general position} if no three points in $P$ lie on a common great circle in the sphere. The main result of this paper is the following.

\begin{theorem}\label{thm:main}
Let $k\ge3$ be a positive integer and let $P$ be a set of $k$ points in general position in $\SS^2$. Let $\mathcal S$ be obtained from $P$ by adding, for each $p\in P$, its antipodal point $\bar p$ into $\mathcal S$. If each pair $\{p,\bar p\}$ in $\mathcal S$ is joined by a geodesic in such a way that no two of these geodesics cross each other, then the geodesic drawing $\widehat D_n$ obtained from $n=2k$ points in $\mathcal S$ by joining any two points not in the same pair by a geodesic arc, together with the $k$ geodesics joining antipodal pairs, is a geodesic drawing of the complete graph $K_n$ with $H(n)$ crossings. This drawing has the following additional properties. By deleting any point from any pair in $\mathcal S$, we obtain a drawing of $K_{n-1}$ with precisely $H(n-1)$ crossings, and by adding any new point (in general position with respect to $\mathcal S$) and adding geodesics from that point to all other points, we obtain a geodesic drawing of $K_{n+1}$ with $H(n+1)$ crossings.
\end{theorem}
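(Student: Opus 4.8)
The plan is to count the crossings of $\widehat D_n$ one $4$-subset at a time. Since the points are in general position, $\widehat D_n$ is a good drawing (no two independent edges cross twice, no three edges share an interior point), so its number of crossings equals $\sum_Q c(Q)$, where $Q$ ranges over the $4$-element subsets of $\mathcal S$ and $c(Q)$ is the number of crossing pairings among the three ways of splitting $Q$ into two disjoint edges. The key observation I would use is that a minor geodesic arc from $a$ to $b$ is exactly the set of unit vectors in the positive cone $\{\lambda a+\mu b:\lambda,\mu\ge 0\}$, so two minor arcs $ab$ and $cd$ cross precisely when these cones share a ray, i.e.\ when there is a relation $\lambda a+\mu b=\nu c+\rho d$ with all coefficients positive. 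For four points in linearly general position there is a unique (up to scale) dependence $\sum_i\lambda_i x_i=0$ with all $\lambda_i\ne 0$, and the remark above says: the $K_4$ on $a,b,c,d$ has exactly one crossing when this dependence has a balanced $(+,+,-,-)$ sign pattern (the crossing being the pairing that separates the two signs), and no crossing otherwise. This sign criterion is the entire ``book proof'' engine.

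Next I would split the $4$-subsets of $\mathcal S$ by how many antipodal pairs they contain. If $Q$ consists of four points from four distinct pairs (Type I), then the dependence coefficients of $\epsilon_i u_i$ carry signs $\mathrm{sign}(\lambda_i)\epsilon_i$; as $\epsilon$ runs over $\{\pm1\}^4$ these run over all sign vectors, and exactly $\binom{4}{2}=6$ of them are balanced, so each group of four pairs contributes $6$, giving $6\binom{k}{4}$ crossings independently of the positions. If $Q=\{p,\bar p,x,y\}$ contains exactly one antipodal pair (Type II), the two mixed pairings $\{px,\bar py\}$ and $\{py,\bar px\}$ never cross, because an open hemisphere is geodesically convex and hence each arc stays on its own side of the great circle through $x,y$; the remaining pairing pits the half-circle $p\bar p$ against $xy$, and grouping the four edges that join the two other pairs shows exactly one of them crosses $p\bar p$ --- the two transversal edges meet the great circle of $p\bar p$ in two antipodal points, and a semicircle contains exactly one point of any antipodal pair. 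Summing gives $3\binom{k}{3}$. If $Q$ is two full pairs (Type III), the antipodal-edge pairing contributes nothing by hypothesis, while the other two pairings lie on a single great circle in interleaved antipodal order, so their minor arcs are disjoint; Type III contributes $0$. Adding up, $6\binom{k}{4}+3\binom{k}{3}=\tfrac14 k(k-1)^2(k-2)=H(2k)$.

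For the additional properties I would avoid re-counting and instead use an antipodal doubling trick. Deleting a point from a pair and adding one general-position point both produce the same kind of configuration: $m$ antipodal pairs together with one extra point $v$ (with $m=k-1$ or $m=k$), and I claim its crossing number is $H(2m+1)$. To prove this, introduce the antipode $\bar v$ joined to $v$ by a non-crossing half-circle, obtaining the drawing $\widehat D_{2m+2}$ produced by the theorem's construction on $m+1$ pairs, whose crossing number is $H(2m+2)$ by the case just proved. Splitting its crossings according to incidence to $v$ and $\bar v$ gives $H(2m+2)=H(2m)+2A+B$, where $A$ is the number of crossings created by inserting $v$ alone (equal for $v$ and $\bar v$ because the whole drawing is invariant under the antipodal isometry), and $B$ counts crossings meeting both $v$ and $\bar v$. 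By the Type II analysis, $B$ equals the number of edges crossing the half-circle $v\bar v$, namely one per choice of two further pairs, so $B=\binom{m}{2}$. The arithmetic identity $H(2m+2)-2H(2m+1)+H(2m)=\binom{m}{2}$ then forces $H(2m)+A=H(2m+1)$, which is exactly the crossing number after inserting $v$.

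I expect the main obstacle to be the Type II step, that is, controlling the half-circles. Everything about the minor arcs collapses instantly to the sign criterion, but the half-circles are genuine semicircles drawn by an arbitrary non-crossing rule, so I must show their contribution is simultaneously correct and independent of that rule. The two ingredients that make this work --- geodesic convexity of open hemispheres, which kills the mixed pairings, and the observation that the two transversal intersection points on a semicircle form an antipodal pair so that exactly one lies on it --- are where the care is needed, and they are precisely what the non-crossing hypothesis and the antipodal structure are buying us.
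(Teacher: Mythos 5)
Your count of the $H(2k)$ crossings in the even case is correct, and it takes a genuinely different route from the paper: you work one $4$-subset at a time via the Radon-type sign criterion for the unique linear dependence, whereas the paper groups the non-matching edges into the $\binom{k}{2}$ basic $4$-cycles (great circles) and counts $2$ crossings per disjoint pair of such circles. The two arguments meet again at the half-circles, where your ``two antipodal intersection points, exactly one on a semicircle'' observation is a sharper, per-$4$-cycle version of the paper's ``by antipodal symmetry exactly half of the $(k-1)(k-2)$ separated edges cross each half-circle.'' Your version buys a local certificate for each crossing; the paper's buys brevity.

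The additional properties, however, contain two genuine gaps. First, in the addition case you ``introduce the antipode $\bar v$ joined to $v$ by a non-crossing half-circle,'' but such a half-circle need not exist: the half-circles from $v$ to $\bar v$ are parametrized by the equator $E_v$ of $v$, each existing $C_i$ obstructs an arc of $E_v$ containing an antipodal pair (hence of length at least $\pi$), and already two such arcs can cover $E_v$. The paper explicitly flags this (``$Q$ may no longer have strength $0$'') and works around it by letting the new half-circle cross the old ones and observing that those extra crossings all vanish when $\bar q$ is deleted. Second, and more seriously, your identity $H(2m+2)=H(2m)+2A+B$ rests on $A=A'$ ``because the whole drawing is invariant under the antipodal isometry.'' It is not: the antipodal map sends each half-circle $C_i$ to the \emph{complementary} semicircle $-C_i$ of the same great circle, which is not an edge of the drawing. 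The symmetry does give $A_{\mathrm{arc}}=A'_{\mathrm{arc}}$ for crossings of $v$-arcs with minor arcs, but for crossings with half-circles it only gives that the $v$-contribution and the $\bar v$-contribution \emph{sum} to $m-1$ per half-circle (exactly one of the four arcs from $\{v,\bar v\}$ to a given pair crosses $C_i$, and it may sit at $v$ or at $\bar v$); nothing forces an even split for a single $i$. What you actually need is that for every unordered pair $\{i,j\}$ exactly one of the four arcs from $v$ to $\{p_i,\bar p_i,p_j,\bar p_j\}$ crosses $C_i\cup C_j$ --- a statement equivalent to the $K_5$ case of the very deletion property being proved, and one that genuinely uses the pairwise disjointness of the half-circles (flipping $C_i$ to $-C_i$ changes the parity and simultaneously creates a half-circle crossing). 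Until that lemma is supplied, the inductive bookkeeping $H(2m)+A=H(2m+1)$ does not close.
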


The theorem implies that $\crg(K_n)\le H(n)$ for every positive integer $n$.
This result is surprising in two ways. Firstly, it is known that the rectilinear crossing number (geodesic version in the Euclidean plane) of complete graphs is strictly larger than the usual crossing number (see \cite{LoVeWaWe}).
So, assuming the Hill conjecture, it is surprising that the geodesic crossing number in the sphere is not different.
Secondly, the abundance of obtained Hill drawings is also quite unexpected.

In 1968, Moon \cite{Moon65} proved that a random set of $n$ points on the unit sphere $\SS^2$ in $\RR^3$ joined by geodesics (segments of great circles) gives rise to a drawing whose number of crossings asymptotically approaches the Hill value $H(n)$. Our construction gives a very simple explanation of this phenomenon. Indeed, in a forthcoming work \cite{MoWe} the following result with several interesting consequences is derived.

\begin{theorem}[Mohar and Wesolek \cite{MoWe}]
Let $\mu$ be an antipodally symmetric probability distribution on the unit sphere $\SS^2$ such that for every great circle $Q\subset \SS^2$, $\mu(Q)=0$. Then a $\mu$-random set of $n$ points on $\SS^2$ joined by geodesics (segments of great circles) gives rise to a drawing $D_n$ of the complete graph $K_n$ such that $cr(D_n)/H(n)=1$ a.a.s.
\end{theorem}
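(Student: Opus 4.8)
The plan is to realize $\crn(D_n)$ as a sum of crossing indicators over $4$-element subsets, and then to reduce the statement to (i) an exact expectation computation and (ii) a routine concentration estimate. Write $X_1,\dots,X_n$ for the $n$ independent $\mu$-distributed points. Since two independent edges of a geodesic drawing can cross only when their four endpoints are distinct, we have $\crn(D_n)=\sum_{S}c(S)$, where $S$ ranges over the $\binom{n}{4}$ four-subsets of $\{X_1,\dots,X_n\}$ and $c(S)$ is the number of crossings in the geodesic drawing of $K_4$ induced on $S$. Because $\mu(Q)=0$ for every great circle $Q$, almost surely no two of the $X_i$ coincide or are antipodal and no three lie on a common great circle; hence every $c(S)$ is a well-defined bounded integer and the arcs meet transversally. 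Thus $\crn(D_n)$ is an honest $U$-statistic of degree $4$, and it suffices to compute $\mathbb{E}[c(S)]$ and to control fluctuations.

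The exact computation of $\mathbb{E}[c(S)]$ is the heart of the argument, and it is where antipodal symmetry and the mechanism behind Theorem~\ref{thm:main} enter. Since $\mu$ is antipodally symmetric, I would generate each point as $X_i=\sigma_i Z_i$, with $Z_i\sim\mu$ and $\sigma_i$ a uniform sign in $\{\pm1\}$, all independent; as $\mu(-A)=\mu(A)$, the resulting $X_i$ are again i.i.d.\ $\mu$. Now condition on $Z_1,Z_2,Z_3,Z_4$, which are almost surely four points in general position on $\SS^2$. The only remaining randomness is the uniform sign vector $(\sigma_1,\dots,\sigma_4)\in\{\pm1\}^4$, and the four chosen points together with their antipodes form exactly the configuration $\mathcal S$ of Theorem~\ref{thm:main} with $k=4$. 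The finite identity underlying that theorem shows that, summed over the $16$ sign vectors, the induced geodesic copies of $K_4$ contain precisely $6$ crossings in total: representing each short arc as the radial projection of its chord, the arcs $Z_iZ_j$ and $Z_kZ_l$ cross iff the line $\mathrm{span}(Z_i,Z_j)\cap\mathrm{span}(Z_k,Z_l)=\RR x$ meets both planar cones, and as the signs vary each of the three pairings realizes such a crossing for exactly two (mutually antipodal) sign vectors, giving $3\cdot 2=6$. Hence $\mathbb{E}[c(S)\mid Z_1,\dots,Z_4]=6/16=\tfrac38$, so $\mathbb{E}[\crn(D_n)]=\tfrac38\binom{n}{4}$. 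Since $\binom{n}{4}\sim n^4/24$ and $H(n)\sim n^4/64$, this yields $\mathbb{E}[\crn(D_n)]/H(n)\to1$.

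It remains to show that $\crn(D_n)$ concentrates around its mean, for which I would use the standard second-moment estimate for sums over subsets: $\mathrm{Var}(\crn(D_n))=\sum_{S,S'}\mathrm{Cov}\bigl(c(S),c(S')\bigr)$, where $\mathrm{Cov}(c(S),c(S'))=0$ whenever $S\cap S'=\varnothing$, because then $c(S)$ and $c(S')$ depend on disjoint, independent blocks of points. The number of pairs $(S,S')$ with $S\cap S'\neq\varnothing$ is $O(n^{7})$ (dominated by $|S\cap S'|=1$) and each covariance is bounded, so $\mathrm{Var}(\crn(D_n))=O(n^{7})$, while $\mathbb{E}[\crn(D_n)]=\Theta(n^{4})$. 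By Chebyshev's inequality, for every fixed $\varepsilon>0$,
\[
\mathbb{P}\bigl(|\crn(D_n)-\mathbb{E}[\crn(D_n)]|>\varepsilon\,\mathbb{E}[\crn(D_n)]\bigr)\le\frac{\mathrm{Var}(\crn(D_n))}{\varepsilon^2\,\mathbb{E}[\crn(D_n)]^2}=O(n^{-1})\to0.
\]
Combining this with $\mathbb{E}[\crn(D_n)]/H(n)\to1$ gives $\crn(D_n)/H(n)\to1$ a.a.s., as claimed.

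The only delicate point is the exact value $\tfrac38$: everything hinges on the fact that, over the $16$ antipodal sign patterns of four base points, the induced drawings carry exactly $6$ crossings, which is precisely the combinatorial identity that powers the ``book proof'' of Theorem~\ref{thm:main}. Once that constant is in hand, the asymptotic matching $\tfrac38\binom{n}{4}\sim H(n)$ and the Chebyshev concentration are routine, so I expect the reduction to the sign-conditioning identity to be the main (and essentially the only) obstacle.
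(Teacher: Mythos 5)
The paper does not actually prove this statement---it is quoted from the forthcoming work \cite{MoWe}---so there is no in-text proof to compare against. Your argument, however, is correct and complete, and it implements precisely the ``simple explanation'' that the paper advertises immediately before the statement. The constant at the heart of your computation is the $k=4$ instance of Theorem~\ref{thm:drawing D_n}: the drawing $D_8$ of $M_8$ determined by $Z_1,\dots,Z_4$ and their antipodes has $\tfrac14\cdot 4\cdot3\cdot2\cdot1=6$ crossings, each such crossing has as its four endpoints exactly one representative from each antipodal pair (it comes from two great circles $Q_{pq}$, $Q_{rs}$ with disjoint index pairs), and hence lies in exactly one of the $16$ induced copies of $K_4$ indexed by sign vectors; summing $c(S_\sigma)$ over $\sigma\in\{\pm1\}^4$ therefore gives $6$, which is your ``$3\cdot2$'' count and yields $\mathbb{E}[c(S)]=3/8$, so $\mathbb{E}[\crn(D_n)]=\tfrac38\binom{n}{4}=n(n-1)(n-2)(n-3)/64\sim H(n)$. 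The remaining ingredients are sound: the hypothesis $\mu(Q)=0$ for every great circle gives almost-sure general position (no coincident or antipodal pairs, no three points on a common great circle), so adjacent geodesic arcs do not cross, each $c(S)$ is a well-defined bounded integer, and $\crn(D_n)$ is a genuine degree-$4$ $U$-statistic; the covariance of $c(S)$ and $c(S')$ vanishes for disjoint $S,S'$, the $O(n^7)$ intersecting pairs give $\mathrm{Var}(\crn(D_n))=O(n^7)$ against $\mathbb{E}[\crn(D_n)]^2=\Theta(n^8)$, and Chebyshev finishes the proof. Your argument in fact gives slightly more than stated, namely an explicit $O(n^{-1})$ rate for the concentration and the exact value of $\mathbb{E}[\crn(D_n)]$ for every $n$.
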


\section{On the geodesic crossing number of $K_n-I_t$}

For integers $n\ge5$ and $t$, $0\le t\le \frac{n}{2}$, let $M_{n,t}$ denote the graph obtained from $K_n$ by removing a matching of size $t$. Specifically, we will write $M_n = M_{n,\lfloor n/2\rfloor}$ be the complete graph $K_n$ minus a perfect matching (when $n$ is even) or a near-perfect matching (when $n$ is odd). Note that $M_n$ is isomorphic to the complete multipartite graph $K_{2,2,\dots,2}$ with $k = \tfrac{n}{2}$ parts of size 2, whenever $n$ is even. The edge-set of this graph consists of $\binom{k}{2}$ 4-cycles, each of which joins two parts of size 2 and is called a \emph{basic $4$-cycle}.

\begin{theorem}
\label{thm:drawing D_n}
Let $k\ge3$ be an integer and let $P$ be a set of $k$ points in general position on the unit sphere. Let $\hat P$ be the set of $n=2k$ points obtained from $P$ by adding for each $p\in P$ its antipodal point $\bar p$. For any two points in $\hat P$ that are not antipodal to each other, draw the geodesic segment of the great circle joining them. Then the resulting drawing $D_n=D_n(P)$ of the graph $M_n$ has precisely $\tfrac{1}{4} k(k-1)(k-2)(k-3)$ crossings.
By adding any geodesic half-circle between a pair of antipodal points $p,\bar p$ $(p\in P)$, we obtain $\tfrac{1}{2}(k-1)(k-2)$ additional crossings.
\end{theorem}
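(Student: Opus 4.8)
The plan is to exploit the antipodal symmetry to show that the drawing $D_n$ is, as a point set, nothing more than an arrangement of $\binom{k}{2}$ great circles, and then to count crossings circle-pair by circle-pair. The central observation is this: if $\{p_i,\bar p_i\}$ and $\{p_j,\bar p_j\}$ are two of the antipodal pairs in $\hat P$, then $p_i,\bar p_i,p_j,\bar p_j$ are coplanar with the origin and hence all lie on a single great circle $C_{ij}$, on which they occur in the cyclic order $p_i,p_j,\bar p_i,\bar p_j$. The four edges of the basic $4$-cycle joining these two parts are precisely the four geodesic arcs between consecutive points of this cyclic order, and since each such arc is shorter than a half-circle, these four geodesics tile $C_{ij}$ exactly once. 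Consequently every edge of $M_n$ lies on one of the circles $C_{ij}$, no two edges of the same basic $4$-cycle cross, and the union of all edges is exactly the arrangement of the $\binom{k}{2}$ great circles $C_{ij}$ (which are pairwise distinct by general position). In particular, every crossing of $D_n$ occurs at an intersection point of two distinct circles $C_{ij}$ and $C_{lm}$.

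First I would reduce the count to pairs of these circles. Two distinct great circles always meet in exactly two antipodal points, so I treat two cases according to whether the index pairs $\{i,j\}$ and $\{l,m\}$ share a part. If they share a part, say $\{i,j\}$ and $\{i,m\}$, then both circles pass through $p_i$ and $\bar p_i$; being two distinct great circles they meet nowhere else, so both intersection points are the \emph{vertices} $p_i,\bar p_i$. There every incident arc of $C_{ij}$ and of $C_{im}$ shares the endpoint $p_i$ (or $\bar p_i$), so these intersections are not crossings of the drawing. If instead $\{i,j\}$ and $\{l,m\}$ are disjoint, then I use general position to argue that neither intersection point is a vertex: a vertex $p_s$ lying on $C_{ij}$ would force $p_s,p_i,p_j$ onto a common great circle. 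Since each circle is tiled by its four edges, each intersection point then lies in the interior of an edge of basic $4$-cycle $ij$ and of an edge of basic $4$-cycle $lm$; as these $4$-cycles involve four distinct parts, the two edges share no endpoint, and the intersection is a genuine crossing. Thus each disjoint pair of part-pairs contributes exactly two crossings.

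The enumeration is then immediate: the number of unordered pairs of disjoint $2$-subsets of $\{1,\dots,k\}$ is $3\binom{k}{4}$, so
\[
  \crn(D_n) = 2\cdot 3\binom{k}{4} = 6\binom{k}{4} = \tfrac14 k(k-1)(k-2)(k-3),
\]
as claimed. For the half-circle statement, I would add the geodesic $H_i$ lying on a great circle $G_i$ through $p_i$ and $\bar p_i$, chosen in general position (so that $G_i$ passes through no other vertex and differs from every $C_{lm}$); this excludes only finitely many choices. For a basic $4$-cycle $lm$ with $i\in\{l,m\}$, the circles $G_i$ and $C_{lm}$ share $p_i,\bar p_i$ and hence meet only there, i.e.\ at the endpoints of $H_i$, contributing no crossing. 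For a basic $4$-cycle $lm$ with $i\notin\{l,m\}$, the circles $G_i$ and $C_{lm}$ meet in two antipodal points, exactly one of which lies on the half-circle $H_i$, and by the same interior/endpoint argument as above this point is a genuine crossing. There are $\binom{k-1}{2}=\tfrac12(k-1)(k-2)$ such basic $4$-cycles, giving the claimed number of additional crossings.

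I expect the only real subtlety, and hence the main obstacle, to be the careful bookkeeping of which intersection points of the circle arrangement are genuine crossings rather than coincidences at vertices; the role of general position is exactly to rule out the degenerate incidences (a third point on a circle $C_{ij}$, or the half-circle $G_i$ passing through an extra vertex), and to guarantee that the four edges of a basic $4$-cycle genuinely tile their great circle. Once these degeneracies are excluded, the counting itself is a one-line binomial identity.
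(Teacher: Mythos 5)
Your proof is correct and follows essentially the same route as the paper: both identify each basic $4$-cycle with a great circle $C_{ij}$, observe that two such circles cross twice when the index pairs are disjoint and meet only at shared vertices otherwise, and then count (you enumerate disjoint pairs directly as $2\cdot 3\binom{k}{4}$, while the paper sums $2\binom{k-2}{2}$ crossings per circle and halves the total). The only cosmetic difference is in the half-circle count, where the paper uses antipodal symmetry to split the $(k-1)(k-2)$ crossings of the full great circle evenly between its two halves, whereas you count one crossing per disjoint basic $4$-cycle; both are valid.
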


\begin{proof}
Let $D_n$ be the geodesic drawing described in the statement. Note that every pair of points $p,q\in P$ together with their antipodes $\bar p, \bar q$ determines a great circle $Q_{pq}$ that consists of four edges forming the basic 4-cycle between $\{p,\bar p\}$ and $\{q,\bar q\}$. Any two such great circles $Q_{pq}$ and $Q_{rs}$ cross twice and make two crossings if $\{p,q\}\cap \{r,s\}= \emptyset$. If $|\{p,q\}\cap \{r,s\}| = 1$, then they do not cross. Thus, the edges in each $Q_{pq}$ participate in precisely $2\binom{k-2}{2} = (k-2)(k-3)$ crossings. By summing up these numbers over all $\binom{k}{2}$ possibilities for the pair $\{p,q\}$, we count each crossing twice, so
$$
   cr(D_n) = \frac{1}{2}\binom{k}{2} (k-2)(k-3) = \tfrac{1}{4} k(k-1)(k-2)(k-3).
$$
By adding any great circle through two antipodal points in $\hat P$, we separate $k-1$ of the points in $\hat P$ from their antipodal pairs. There are precisely $(k-1)(k-2)$ edges joining them. Because of the antipodal symmetry of the drawing $D_n$, precisely half of these edges cross each half-circle. Thus, each half-circle is crossed $\tfrac{1}{2}(k-1)(k-2)$ many times.
\end{proof}

We say that the set $P$ of points in $\SS^2$ has \emph{strength} $s$ if there is a choice of half-circles joining each point in $P$ with its antipodal point $\bar p$ such that these half-circles cross each other $s$ times.

\begin{corollary}
If a set $P'\subseteq P$ has strength $s$ and $t=|P\setminus P'|$, then the drawing $D_n(P)$ can be extended to a drawing of the graph $M_{n,t}$ with $H(n) - \tfrac{1}{2}t(k-1)(k-2) + s$ crossings.
\end{corollary}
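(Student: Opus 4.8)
The plan is to assemble the drawing of $M_{n,t}$ piece by piece, starting from the drawing $D_n(P)$ of the full graph $M_n$ guaranteed by Theorem~\ref{thm:drawing D_n}, which already has $\tfrac14 k(k-1)(k-2)(k-3)$ crossings, and then selectively reinserting the antipodal half-circles. Recall that $M_n$ is $K_n$ minus a perfect (or near-perfect) matching, so to reach $K_n$ we would add a half-circle for each of the $k$ antipodal pairs $\{p,\bar p\}$ with $p\in P$; to reach $M_{n,t}$ instead we add a half-circle only for those pairs whose point lies in $P'$, leaving the $t=|P\setminus P'|$ pairs in $P\setminus P'$ unjoined. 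This is exactly the graph $M_{n,t}$, since the missing matching edges correspond precisely to the $t$ omitted antipodal pairs.

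First I would account for the crossings between the added half-circles and the edges of $D_n(P)$. By the second part of Theorem~\ref{thm:drawing D_n}, each individually added half-circle contributes $\tfrac12(k-1)(k-2)$ crossings with the edges of $M_n$. Adding half-circles for all $k-t$ pairs in $P'$ therefore contributes $(k-t)\cdot\tfrac12(k-1)(k-2)$ such crossings. The key bookkeeping observation is that the total number of half-circle--to--$M_n$ crossings for the full complement of $k$ half-circles would be $k\cdot\tfrac12(k-1)(k-2)$, and this full configuration is exactly the complete Hill drawing $\widehat D_n$ of Theorem~\ref{thm:main} with $H(n)$ crossings. The quantity we save by omitting $t$ of the half-circles is precisely $t\cdot\tfrac12(k-1)(k-2)$, which is where the term $-\tfrac12 t(k-1)(k-2)$ in the corollary's count originates.

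Next I would handle the crossings among the added half-circles themselves. The $k-t$ half-circles joining the pairs in $P'$ cross one another; by the definition of strength, $P'$ admits a choice of half-circles crossing each other exactly $s$ times, and this accounts for the additive $+s$ term. The only care needed is to confirm that the half-circle--to--$M_n$ count in Theorem~\ref{thm:drawing D_n} and the half-circle--to--half-circle count measured by the strength are genuinely disjoint families of crossings, so that they simply add; this is immediate since one family pairs a half-circle with a non-matching edge of $M_n$ and the other pairs two half-circles. Summing the three contributions gives
\begin{equation*}
  \tfrac14 k(k-1)(k-2)(k-3) + (k-t)\cdot\tfrac12(k-1)(k-2) + s,
\end{equation*}
and I would then verify algebraically that $\tfrac14 k(k-1)(k-2)(k-3) + k\cdot\tfrac12(k-1)(k-2) = H(n)$ for $n=2k$, so that the expression rearranges into $H(n)-\tfrac12 t(k-1)(k-2)+s$.

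The main obstacle I anticipate is not the crossing arithmetic, which is routine given Theorem~\ref{thm:drawing D_n}, but rather justifying that the strength-realizing half-circles for $P'$ can be installed simultaneously with the drawing $D_n(P)$ without introducing any uncounted crossings or destroying the antipodal symmetry that Theorem~\ref{thm:drawing D_n} relies on when asserting that exactly half of the $(k-1)(k-2)$ relevant edges meet each half-circle. In particular I would want to check that the ``exactly half'' symmetry argument applies verbatim to each of the $k-t$ half-circles independently, so that their contributions to the $M_n$-crossing count are additive rather than interacting; once that independence is established, the corollary follows by direct summation.
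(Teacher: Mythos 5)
Your proposal is correct and follows essentially the same route as the paper: extend $D_n(P)$ by the $|P'|=k-t$ half-circles realizing strength $s$, apply the second part of Theorem~\ref{thm:drawing D_n} to each half-circle, and use the identity $\tfrac14 k(k-1)(k-2)(k-3)+\tfrac12 k(k-1)(k-2)=H(n)$ to rewrite the total as $H(n)-\tfrac12 t(k-1)(k-2)+s$. The only difference is that you spell out the disjointness of the two crossing families and the independence of the ``exactly half'' symmetry argument for each half-circle, which the paper leaves implicit.
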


\begin{proof}
We extend the drawing $D_n$ by adding half-circles joining the antipodal pairs $p,\bar p$ for $p\in P'$ so that these half-circles make $s$ crossings among each other. By Theorem \ref{thm:drawing D_n}, the number of crossings is $\tfrac{1}{4} k(k-1)(k-2)(k-3) + \tfrac{1}{2}(k-1)(k-2)|P'| + s$, which is the same as the number in the corollary since $\tfrac{1}{4} k(k-1)(k-2)(k-3) + \tfrac{1}{2}k(k-1)(k-2) = H(n)$.
\end{proof}

It is easy to see that there are many sets of strength 0. This leads us to the following conjecture.

\begin{conjecture}
\label{conj:cr M_nt}
The crossing number of $M_{n,t}$ is equal to $$H(n) - \tfrac{1}{2}t(k-1)(k-2).$$
\end{conjecture}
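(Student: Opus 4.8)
The upper bound $cr(M_{n,t}) \le H(n) - \tfrac{1}{2}t(k-1)(k-2)$ is already in hand: by the Corollary above it suffices to take a set $P$ possessing a subset $P'$ of strength $0$ with $|P\setminus P'| = t$, and such sets are abundant. Thus the entire content of the conjecture is the matching lower bound $cr(M_{n,t}) \ge H(n) - \tfrac{1}{2}t(k-1)(k-2)$. Before proposing an attack it is worth recording that the case $t=0$ reads $cr(K_n)\ge H(n)$, which for even $n$ is exactly Hill's (still open) conjecture. Consequently no unconditional proof of the full statement can be expected, and the realistic aim is a \emph{reduction}: to show that the lower bound for all $t$ is a formal consequence of its validity at the endpoint $t=0$, so that the conjecture is equivalent to Hill's conjecture (for both parities) rather than strictly stronger.

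The first step is to replace the conjecture by an equivalent \emph{incremental} statement. Writing $f(n,t)=H(n)-\tfrac{1}{2}t(k-1)(k-2)$ with $k=\tfrac{n}{2}$, one has $f(n,t-1)-f(n,t)=\tfrac{1}{2}(k-1)(k-2)=\binom{k-1}{2}$, while $f(n,k)=\tfrac{1}{4}k(k-1)(k-2)(k-3)$ is the number of crossings of the drawing $D_n$ in Theorem~\ref{thm:drawing D_n}. Hence the conjecture is equivalent to the assertion that adding any single matching edge to $M_{n,t}$ raises the crossing number by exactly $\binom{k-1}{2}$; telescoping from $t=k$ up to $t=0$ then recovers the value $H(n)$, in agreement with the identity noted in the proof of the Corollary. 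I would prove the two directions of this per-edge statement separately. The ``at most'' direction is again the construction. The ``at least'' direction---that deleting one matching edge from $M_{n,t-1}$ saves \emph{no more than} $\binom{k-1}{2}$ crossings in an optimal drawing---is the crux.

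The tool I would use for the lower bound is the standard vertex-deletion counting, adapted to the matching. For a drawing $D$ of $M_{n,t}$ one has the identity $\sum_{v}cr(D-v)=(n-4)\,cr(D)$, since every crossing is destroyed by the deletion of exactly the four vertices carrying its two edges. Deleting an \emph{unmatched} vertex leaves a drawing of $M_{n-1,t}$, while deleting a \emph{matched} vertex frees its partner and leaves a drawing of $M_{n-1,t-1}$; there are $n-2t$ vertices of the first kind and $2t$ of the second. Substituting the inductive lower bounds for the two smaller families and solving for $cr(D)$ yields a recursive lower bound whose base cases are the complete graphs $t=0$. To make the parities match one must first fix a single formula $f(n,t)$ valid for all $n$ (extending the construction to odd order by deleting a vertex), and then run the induction on $n$ with $t$ carried along.

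The main obstacle is that this counting is inherently lossy, and is lossy already in the case it is meant to be reduced to: even for $t=0$ the identity only delivers $cr(K_n)\ge \tfrac{n}{n-4}cr(K_{n-1})$, a bound well known to fall short of $H(n)$, so the averaging transmits tightness but cannot create it. The genuine difficulty of the lower bound is therefore precisely the genuine difficulty of Hill's lower bound, and it will not be dissolved by bookkeeping alone. Accordingly I would split the target into what is actually attainable: (i) a clean \emph{conditional} theorem, ``if $cr(K_m)=H(m)$ for all $m\le n$ then the conjecture holds for all $t\le k$'', obtained by feeding the assumed endpoint values through a sharpened, matching-weighted version of the deletion identity---tracking for each surviving crossing whether its edges are matched, so that matched and unmatched vertices receive different multiplicities and the recursion closes exactly; (ii) unconditional verification for small $n$ from the known exact values of $cr(K_n)$; and (iii) the extremal case $t=k$, i.e.\ the crossing number of the cocktail-party graph $K_{2,\dots,2}$, which is itself a natural and hard open problem and which I expect to demand genuinely new lower-bound ideas beyond counting.
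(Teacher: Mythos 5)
The statement you were asked to prove is labelled a \emph{conjecture} in the paper, and the paper offers no proof of it: the text only establishes the upper bound $cr(M_{n,t})\le H(n)-\tfrac{1}{2}t(k-1)(k-2)$ via the corollary following Theorem~\ref{thm:drawing D_n} (take $P'$ of strength $0$ with $|P\setminus P'|=t$), and then explicitly poses the matching lower bound as open. Your diagnosis is therefore correct and there is nothing in the paper to compare your lower-bound strategy against. You rightly observe that the case $t=0$ is $cr(K_n)=H(n)$, i.e.\ Hill's conjecture itself, and that the other endpoint $t=k$ is the crossing number of the cocktail-party graph $K_{2,\dots,2}$, also open; so no unconditional proof is possible at present. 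Your identification of the upper-bound construction, the telescoping reformulation with increment $\binom{k-1}{2}$ per matching edge, and the counting identity $\sum_v cr(D-v)=(n-4)\,cr(D)$ are all correct, as is your warning that this identity is lossy (it cannot even recover $H(n)$ from itself at $t=0$) and hence can at best transmit, not create, tightness.

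To be clear, though, what you have written is a research plan, not a proof: even the conditional reduction in your item (i) is only sketched, and the ``matching-weighted version of the deletion identity'' that would make the recursion close exactly is precisely the step that would need to be carried out and is not obviously available --- the deletion argument averages over vertices and does not distinguish which crossings are saved by removing a matched versus an unmatched vertex finely enough to yield the exact increment $\binom{k-1}{2}$ rather than an averaged inequality. One further small point: the paper defines $k=n/2$ only for even $n$, so before running any induction that alternates parities you would indeed need to fix the convention for odd $n$ (as you note), since the conjectured formula as printed leaves $k$ undefined there. In summary: no error to report, but also no proof --- the statement remains a conjecture, and the honest content of your proposal is the (correct) observation that it is at least as hard as Hill's conjecture.
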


The following corollary establishes Theorem \ref{thm:main}.

\begin{corollary}
Let $P\subset \SS^2$ and the drawing $D_n$ be as in Theorem \ref{thm:drawing D_n}.
If $P$ has strength $0$, then $D_n$ can be extended to a geodesic drawing $\widehat D_n$ of the complete graph $K_n$ with $H(n)$ crossings. This drawing has the following additional properties:

(a) The drawing is antipodally symmetric.

(b) For every vertex $v$ of $K_n$, the edges incident with $v$ participate in precisely $\tfrac{1}{16} (n-2)^2(n-4)$ crossings.

(c) By deleting any point from $\hat P$, we obtain a drawing of $K_{n-1}$ with precisely $H(n-1)$ crossings.

(d) By adding any new point (in general position with respect to $P$) and adding geodesics from that point to $\hat P$, we obtain a geodesic drawing of $K_{n+1}$ with $H(n+1)$ crossings.
\end{corollary}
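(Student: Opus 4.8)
The plan is to dispose of the first assertion and of property (a) at once, and then to reduce everything else to a single crossing count at one vertex. The existence of $\widehat D_n$ with exactly $H(n)$ crossings is just the preceding corollary applied with $P'=P$, so that $t=0$ and the strength is $s=0$; equivalently, by Theorem~\ref{thm:drawing D_n} the $k$ half-circles contribute $k\cdot\tfrac12(k-1)(k-2)$ crossings on top of $cr(D_n)=\tfrac14k(k-1)(k-2)(k-3)$, and these add up to $H(n)$. Property (a) is immediate from the construction: the antipodal map is an isometry of $\SS^2$ that fixes $\hat P$ setwise (swapping each $p$ with $\bar p$) and sends the geodesic $pq$ to the geodesic $\bar p\bar q$, so it is a symmetry of the geodesic drawing of $M_n$, the matching edges $p\bar p$ being interchanged with the complementary half-circles.

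The heart of the matter is property (d), from which the rest follows. Writing $cr(\widehat D_n+u)=H(n)+X_u$, where $X_u$ is the number of crossings incident with the new vertex $u$, I must show $X_u=\tfrac12k(k-1)^2=H(n+1)-H(n)$. Since two edges at $u$ never cross, $X_u$ splits as (i) crossings of the $u$-geodesics with the $M_n$-edges, plus (ii) crossings of the $u$-geodesics with the half-circles. For (i) I would use the antipodal-doubling idea: set $P^+=P\cup\{u\}$ and apply Theorem~\ref{thm:drawing D_n} to get $cr(D_{2k+2}(P^+))-cr(D_{2k}(P))=k(k-1)(k-2)$; by antipodal symmetry the contributions of $u$ and of $\bar u$ are equal, and because every $u$-edge and every $\bar u$-edge lie on great circles meeting only at $u,\bar u$ they never cross each other, so this difference is exactly twice the $u$-contribution, giving $\tfrac12k(k-1)(k-2)$ for (i).

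The crux is (ii), the claim that the $u$-geodesics meet the half-circles in exactly $\binom{k}{2}$ points, and the plan here is a parity argument. Pairing each $p_i$ with $\bar p_i$, the two geodesics $up_i$ and $u\bar p_i$ together form a half-circle $H_i$ through $u$, and an easy count on the two great circles involved shows that $H_i$ meets each $c_j$ with $j\ne i$ at most once; let $\chi(i,j)\in\{0,1\}$ record this, so that (ii) equals $\sum_{i\ne j}\chi(i,j)$. Now consider the simple closed curve $\gamma_i=H_i\cup c_i$, which separates $\SS^2$; hence any closed curve, in particular $\gamma_j$, crosses it an even number of times. The crossings of $\gamma_i$ with $\gamma_j$ are precisely the single crossing of $H_i$ with $H_j$ at $u$, the $\chi(i,j)$ crossing of $H_i$ with $c_j$, the $\chi(j,i)$ crossing of $c_i$ with $H_j$, and the crossings of $c_i$ with $c_j$ — and this last number is $0$ exactly because $P$ has strength $0$. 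Evenness then forces $\chi(i,j)+\chi(j,i)=1$ for every pair $\{i,j\}$, whence $\sum_{i\ne j}\chi(i,j)=\binom{k}{2}$. Combining (i) and (ii) yields $X_u=\tfrac12k(k-1)(k-2)+\binom{k}{2}=\tfrac12k(k-1)^2$, which is (d). I expect this parity step to be the main obstacle, both because it is the one place where the strength-$0$ hypothesis is genuinely used and because one must verify that these really are the only (and transverse) intersections among the four arcs.

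Finally, (c) and (b) follow formally. Deleting a point $v$ (say $v=p_1$) from $\widehat D_n$ leaves exactly the drawing obtained from $\widehat D_{n-2}(P\setminus\{p_1\})$, equipped with the restricted half-circles $c_2,\dots,c_k$, by adding the single new point $\bar p_1$; since $P\setminus\{p_1\}$ still has strength $0$ and $\{p_2,\dots,p_k,\bar p_1\}$ is in general position (here one uses that $p_1$ lies on no great circle through two other points of $P$), property (d) for $k-1$ gives $H(n-1)$ crossings, which is (c), the smallest instance being checked by hand. Property (b) is then immediate: the crossings incident with any $v$ number $cr(\widehat D_n)-cr(\widehat D_n-v)=H(n)-H(n-1)=\tfrac1{16}(n-2)^2(n-4)$.
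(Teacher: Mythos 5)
Your argument is correct, but it runs in the opposite logical direction from the paper. The paper dismisses (a)--(c) as easy observations and proves (d) by passing to the $K_{n+2}$ drawing on $\hat Q$ for $Q=P\cup\{q\}$, noting that the only possible half-circle/half-circle crossings involve the $q\bar q$ half-circle, that these vanish when $\bar q$ is deleted, and then invoking (b) to subtract the $\tfrac{1}{16}n^2(n-2)$ crossings at $\bar q$. You instead prove (d) from scratch by splitting the new vertex's crossing count $X_u$ into a contribution against the $M_n$-edges (obtained from Theorem~\ref{thm:drawing D_n} by antipodal doubling, giving $\tfrac12 k(k-1)(k-2)$) and a contribution against the half-circles, which your Jordan-curve parity argument pins down as exactly $\binom{k}{2}$: since $\gamma_i=H_i\cup c_i$ separates the sphere and $\gamma_i\cap\gamma_j$ consists of the crossing at $u$ plus $\chi(i,j)+\chi(j,i)$ plus the (zero, by strength $0$) crossings of $c_i$ with $c_j$, evenness forces $\chi(i,j)+\chi(j,i)=1$. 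You then deduce (c) and (b) formally from (d). This is a genuinely different and arguably more self-contained route: the paper's proof of (d) rests on (b), which it never proves and which itself needs an argument about how the half-circle crossings distribute over the edges at a fixed vertex --- precisely the kind of count your parity lemma supplies. The trade-off is length and a few genericity checks you must (and mostly do) make explicit: that $H_i$ really is a half-circle meeting each $c_j$ in at most one interior transversal point (this uses general position of $P\cup\{u\}$), that the reduction in (c) requires $k-1\ge 3$ so the smallest case is verified by hand, and the tacit assumption --- shared with the paper --- that the new point does not lie on the great circle of any chosen half-circle.
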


\begin{proof}
Statements (a)--(c) are easy observations and their proof is left for the reader. To prove (d), let $Q = P \cup \{q\}$, where $q$ is the added point. Consider the corresponding drawing of $K_{n+2}$ for $\hat Q$. Note that $Q$ may no longer have strength 0, but since $P$ has strength 0, there is a drawing where the only half-circle intersecting other half-circles is the half-circle joining $q$ and $\bar q$. All these added crossings disappear after removing $\bar q$, and thus by (b), the extended drawing of $K_{n+1}$ has $H(n+2) - \tfrac{1}{16} n^2(n-2) = H(n+1)$ crossings.
\end{proof}

\section{Final comments}
\label{sect:final comments}

The referees of the original submission of this manuscript pointed out that the idea of using antipodal drawings on the sphere appeared earlier on Math Overflow related to a question of Jan Kyncl \cite{Ky13}, who originally asked whether any Hill drawings different from the cylindrical and the 2-page drawings from \cite{HaHi62,BlKo63} are known. In a later edit to the original question, the antipodal drawings of $K_{2n}$ were mentioned and it was asked whether all such drawings are cylindrical. Here we give some answers.

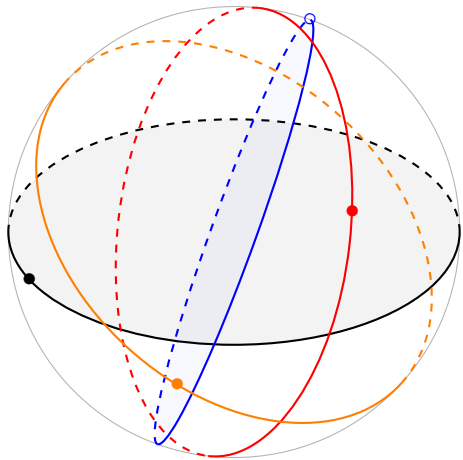
\begin{figure}
  \centering
  \def\r{3}
  \tdplotsetmaincoords{60}{125}
  \begin{tikzpicture}[tdplot_main_coords]
    \draw[tdplot_screen_coords,thin,black!30] (0,0,0) circle (\r);
    \tdplotCsDrawLatCircle%
      [thick,tdplotCsFill/.style={opacity=0.05}]{\r}{0}
    \tdplotCsDrawGreatCircle%
      [blue,thick,tdplotCsFill/.style={opacity=0.03}]{\r}{120}{110}
    \tdplotCsDrawGreatCircle%
      [red,thick]{\r}{-30}{70}
    \tdplotCsDrawGreatCircle%
      [orange,thick]{\r}{90}{36}
    \tdplotCsDrawPoint[blue]{\r}{180}{36}
    \tdplotCsDrawPoint{\r}{-30}{90}
    \tdplotCsDrawPoint[red]{\r}{72.3}{60}
    \tdplotCsDrawPoint[orange]{\r}{20}{104}
  \end{tikzpicture}
  \caption{Four points on $\SS^2$ and their antipodes determine disjoint half-circles. The points and their great circles are shown (in different colors), the corresponding half-circles are taken in the clockwise direction starting from each of the points.}\label{fig:4antipodal}
\end{figure}

\begin{lemma}\label{lem:blowup halfcircle}
  For every $\varepsilon>0$, every integer $n\ge1$ and every half-circle $C$ on the unit sphere $\SS^2$, the $\varepsilon$-neighborhood of $C$ contains $n$ pairwise disjoint half-circles.
\end{lemma}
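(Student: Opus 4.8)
The plan is to realize the $n$ half-circles as rotated copies of $C$ under a one-parameter group of rotations of $\SS^2$, with the axis chosen so that a copy rotated through a small nonzero angle is \emph{disjoint} from $C$, and then to take equally spaced small angles. First I would normalize: since any two half-circles differ by a rotation of $\SS^2$, and rotations preserve distances (hence the $\varepsilon$-neighborhood) as well as disjointness, I may assume $C$ is the upper half of the equator, with endpoints $p=(1,0,0)$, $\bar p=(-1,0,0)$, supporting great circle $\Gamma=\{z=0\}$, and unit normal $N=(0,0,1)$. For a unit vector $\ell$ let $R_u$ be the rotation about $\ell$ by angle $u$. As $u\mapsto R_u$ is a continuous family of isometries with $R_0=\mathrm{id}$, for each fixed $\ell$ there is $u_0>0$ with $R_u(C)$ inside the $\varepsilon$-neighborhood of $C$ for all $0<u\le u_0$. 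The problem thus reduces to finding one axis $\ell$ with $C\cap R_u(C)=\emptyset$ for all $0<u\le u_0$: setting $C_i:=R_{(i-1)u_0/n}(C)$ for $i=1,\dots,n$ then places $n$ half-circles in the $\varepsilon$-neighborhood, and for $i<j$ the isometry $R_{-(i-1)u_0/n}$ puts $C_i\cap C_j$ in bijection with $C\cap R_{(j-i)u_0/n}(C)=\emptyset$, giving pairwise disjointness.

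For the axis I would take $\ell$ in the plane spanned by $N$ and $p$ but parallel to neither, say $\ell=\tfrac{1}{\sqrt2}(1,0,1)$. Two distinct great circles meet in exactly one antipodal pair, so $C\cap R_u(C)\subseteq \Gamma\cap\Gamma_u=\{x_u,\bar x_u\}$, where $\Gamma_u=R_u(\Gamma)$ has unit normal $R_u(N)$; computing the common direction $N\times R_u(N)$ gives $x_u=(1,cu,0)+O(u^2)$ for a constant $c>0$, so that $x_u$ sits near the endpoint $p$ and $\bar x_u$ near $\bar p$. The crux is then to decide, for each of $\Gamma$ and $\Gamma_u$, on which of its two half-arcs these points lie. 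One checks that $x_u$ lies just \emph{inside} $C$ (it has small positive $y$-coordinate, and $C$ leaves $p$ toward increasing $y$) but just \emph{outside} $R_u(C)$, because the endpoint $R_u(p)=(1,c'u,0)+O(u^2)$ with $c'>c$ has moved farther in the $+y$ direction than $x_u$, and $R_u(C)$ departs $R_u(p)$ heading still farther that way, away from $x_u$; symmetrically $\bar x_u$ lies inside the complementary arc $C'$, hence outside $C$. Therefore neither common point of the two supporting great circles lies on both arcs, and $C\cap R_u(C)=\emptyset$.

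I expect this arc-membership check to be the only genuine obstacle: normalizing by a rotation, the continuity argument furnishing $u_0$, and the reduction of pairwise disjointness to the single difference $C\cap R_u(C)$ are all routine. The delicate point is that the choice of $\ell$—with its in-plane component directed along $p\bar p$—forces the intersection points of the two supporting great circles to appear \emph{near the endpoints} of $C$ rather than in its interior; this is precisely what makes a first-order computation at $p$ and $\bar p$ decisive, and one then shrinks $u_0$ so that this asymptotic picture is rigorously valid throughout the range used.
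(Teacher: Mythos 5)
Your proof is correct and rests on the same idea as the paper's (very brief) argument: produce $n$ slightly tilted copies of $C$ whose supporting great circles meet only in an antipodal pair near the endpoints $p,\bar p$, where for each pair of copies at least one of the two arcs is absent, so the copies are disjoint. Your one-parameter rotation group is a clean, explicit realization of the paper's iterative ``below'' construction, with the added benefit that the identity $C_i\cap C_j=R_{(i-1)u_0/n}\bigl(C\cap R_{(j-i)u_0/n}(C)\bigr)$ disposes of \emph{pairwise} (not merely consecutive) disjointness, a point the paper's sketch leaves implicit.
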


\begin{proof}
  Suppose that $p$ and $\bar p$ are the endpoints of $C$. Then it is easy to see that there is a half-circle $C_1$ that is in the $\frac{\varepsilon}{n}$-neighborhood of $C$ ``below" $C$ and that crosses the half-circle containing $C$ in a point of $\overline{C}$ close to $p$. For the next half-circle $C_2$ we repeat the same below $C_1$. Repeating, we obtain $n$ pairwise disjoint half-circles as requested.
\end{proof}

By using Lemma \ref{lem:blowup halfcircle} repeatedly on any initial set of pairwise disjoint half-circles, we obtain diverse sets of antipodal points of strength 0. 

\begin{corollary}\label{cor:construction}
Starting with any antipodal Hill drawing $D$ and replacing each half-circle in $D$ by a collection of pairwise disjoint half-circles in a small neighborhood, one obtains Hill drawings of larger complete graphs.
\end{corollary}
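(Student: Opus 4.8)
The plan is to reduce the statement to the corollary establishing Theorem~\ref{thm:main}, whose only hypotheses on a point set are that it be in general position and have strength~$0$. Recall that an antipodal Hill drawing $D$ of $K_n$ is produced from a strength-$0$ set $P=\{p_1,\dots,p_k\}$ in general position, together with pairwise disjoint half-circles $C_1,\dots,C_k$, where $C_i$ joins $p_i$ to its antipode $\bar p_i$. The replacement operation in the corollary is exactly the blow-up furnished by Lemma~\ref{lem:blowup halfcircle}: for each $i$ we pick $\varepsilon_i>0$ and substitute $C_i$ by $m_i$ pairwise disjoint half-circles $C_i^1,\dots,C_i^{m_i}$ lying in the $\varepsilon_i$-neighborhood of $C_i$. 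Each $C_i^\ell$ joins an antipodal pair; taking $q_i^\ell$ to be its endpoint lying near $p_i$ (so that $\bar q_i^\ell$ lies near $\bar p_i$) yields a new point set $P'=\{q_i^\ell\}$ of size $K=\sum_i m_i$, whose $2K$ points—the $q_i^\ell$ together with their antipodes—and the half-circles $C_i^\ell$ form the blown-up drawing. It therefore suffices to check that $P'$ is in general position and has strength~$0$.

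For strength~$0$ I would first fix the radii $\varepsilon_i$ to be small. Since the half-circles $C_1,\dots,C_k$ are compact and pairwise disjoint, any two of them are separated by a positive distance, and as there are only finitely many there is a common threshold below which the tubular $\varepsilon_i$-neighborhoods are pairwise disjoint. Consequently two blown-up half-circles from different clusters lie in disjoint regions and cannot cross, while two from the same cluster are disjoint by the conclusion of Lemma~\ref{lem:blowup halfcircle}. Hence the whole collection of $K$ half-circles $C_i^\ell$ is pairwise disjoint, i.e.\ $P'$ has strength~$0$.

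The step I expect to be the main obstacle, though a mild one, is verifying that $P'$ can be taken in general position, meaning no three of its points lie on a common great circle. The explicit half-circles produced by Lemma~\ref{lem:blowup halfcircle} need not satisfy this automatically, and since each cluster places several points $q_i^1,\dots,q_i^{m_i}$ close to $p_i$, one must rule out accidental coincidences both within and across clusters. The remedy is a standard genericity argument: general position is an open and dense condition on configurations in $\SS^2$ (its complement is a finite union of codimension-one loci, one for each triple constrained to a great circle), whereas pairwise disjointness of the half-circles is an open condition. The construction of Lemma~\ref{lem:blowup halfcircle} leaves room to perturb each new point by an arbitrarily small amount, so a sufficiently small generic perturbation simultaneously preserves disjointness and moves $P'$ into general position.

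With $P'$ a set of $K\ge k\ge 3$ points in general position and of strength~$0$, the corollary establishing Theorem~\ref{thm:main} applies directly: the drawing $D_{2K}(P')$ of $M_{2K}$, extended by the $K$ pairwise disjoint half-circles $C_i^\ell$, is a geodesic drawing of $K_{2K}$ with exactly $H(2K)$ crossings, hence a Hill drawing. Whenever at least one $m_i\ge2$ we have $2K>n$, so this is a Hill drawing of a strictly larger complete graph; letting the multiplicities $m_i$ vary and iterating the blow-up, as in the remark preceding the corollary, produces the claimed family.
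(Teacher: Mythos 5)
Your proposal is correct and follows exactly the route the paper intends: apply Lemma~\ref{lem:blowup halfcircle} to each half-circle, shrink $\varepsilon$ so the tubular neighborhoods of the original disjoint half-circles stay disjoint, and feed the resulting strength-$0$ antipodal configuration into the corollary establishing Theorem~\ref{thm:main}. The paper offers no written proof beyond the remark preceding the corollary, and your additional perturbation argument ensuring the new points are in general position fills in a detail the paper leaves implicit.
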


If we start with $D$ consisting of a single half-circle and use the lemma just once, we obtain a cylindrical drawing. If we start with two half-circles, say half of the equator and another one joining the south and the north pole, we obtain a four-partite version of cylindrical drawings. We can start with more than two half-circles of strength 0. Figure \ref{fig:4antipodal} shows a possibility of four half-circles to start with. Then we obtain yet another kinds of Hill drawings (six-partite or eight-partite). Let us note that Camacho et al.\ \cite{AMUC19} proved that there is no tripartite version of cylindrical drawings yielding the Hill bound.

Note that one can interchangeably take the ``below" and the ``above" versions of the construction described in the proof of Lemma \ref{lem:blowup halfcircle}. Moreover, one can use the larger neighborhood of $C\cup \overline{C}$ as long as it is disjoint from other half-circles. This yields a great variety of Hill drawings that answer the question by Kyncl in \cite{Ky13}. It also gives further examples of ``highly" non-shellable Hill drawings and greatly extends such examples found in \cite{AAFRV14}. 

\bibliographystyle{abbrv}
\bibliography{Antipodal}

\end{document}